\documentclass[twoside,12pt,reqno]{amsart}

\setlength{\parskip}{3mm}
\setlength{\parindent}{0mm}
\textheight=225mm
\textwidth=135mm%
\flushbottom

\usepackage{latexsym}







%

\newcommand{\qdn}{\hspace*{-1.5mm}}
\newcommand{\qqdn}{\hspace*{-2.5mm}}
\newcommand{\xqdn}{\hspace*{-5.0mm}}
\newcommand{\xxqdn}{\hspace*{-10mm}}


\newcommand{\fns}{\footnotesize}




%

%

%

%






\newcommand{\ffnk}[4]{\left[\qdn\ba{#1}#3\\[1mm]#4\ea{\!;\:#2}\right]}


\newcommand{\binm}{\binom}




\newcommand{\nnm}{\nonumber}
\newcommand{\be}{\begin{equation}}
\newcommand{\ee}{\end{equation}}
\newcommand{\ba}{\begin{array}}
\newcommand{\ea}{\end{array}}
\newcommand{\bmn}{\begin{eqnarray}}
\newcommand{\emn}{\end{eqnarray}}
\newcommand{\bnm}{\begin{eqnarray*}}
\newcommand{\enm}{\end{eqnarray*}}
\newcommand{\bln}{\begin{subequations}}
\newcommand{\eln}{\end{subequations}}


%





\newcommand{\lam}{\lambda}










\newtheorem{thm}{Theorem}
\newtheorem{lemm}[thm]{Lemma}
\newtheorem{corl}[thm]{Corollary}

\newtheorem{entry}{Entry}

\newcommand{\bbtm}[4]{\bibitem{kn:#1}{#2,}~{#3,}~{#4.}}
\newcommand{\cito}[1]{\cite{kn:#1}}
\newcommand{\citu}[2]{\cite[#2]{kn:#1}}

%


\begin{document} 
{\fns
\title{Several transformation formulas involving bilateral basic hypergeometric series}
\author{Chuanan Wei, Tong Yu$^*$}

\footnote{\emph{2010 Mathematics Subject Classification}: Primary
05A30 and Secondary 33D15.}

\dedicatory{
School of Biomedical Information and Engineering\\
  Hainan Medical University, Haikou 571199, China}
\thanks{\emph{Email addresses}:
      weichuanan78@163.com (C. Wei), ythainmc@163.com (T. Yu).\\
      The corresponding author$^*$.}

\address{ }
\keywords{The analytic continuation method; Bilateral basic
hypergeometric series; Unilateral basic hypergeometric series}

\begin{abstract}
In terms of the analytic continuation method, we prove three
transformation formulas involving bilateral basic hypergeometric
series. One of them is equivalent to Jouhet's result involving two
$_8\psi_8$ series and two $_8\phi_7$ series.
\end{abstract}

\maketitle\thispagestyle{empty}
\markboth{Chuanan Wei}
         {Bilateral basic hypergeometric series}

\section{Introduction}

\hspace{0.3cm} For any integer $n$ and complex numbers $x$, $q$ with
$|q|<1$, define the $q$-shifted factorial to be
 \bnm
(x;q)_{\infty}=\prod_{i=0}^{\infty}(1-xq^i),\quad
(x;q)_n=\frac{(x;q)_{\infty}}{(xq^n;q)_{\infty}}.
 \enm
For convenience, we shall also adopt the following notations:
 \bnm
&&(x_1,x_2,\ldots,x_r;q)_{\infty}=(x_1;q)_{\infty}(x_2;q)_{\infty}\cdots(x_r;q)_{\infty},\\
&&(x_1,x_2,\ldots,x_r;q)_{n}=(x_1;q)_{n}(x_2;q)_{n}\cdots(x_r;q)_{n}.
 \enm
Following Gasper and Rahman \cito{gasper}, define the bilateral
basic hypergeometric series by
 \bnm
&&\qdn\xqdn{_{r}\psi_s}\ffnk{cccccc}{q,z}{a_1,&a_2,&\ldots,&a_r}{b_1,&b_2,&\ldots,&b_s}
  =\sum_{k=-\infty}^{\infty}\frac{(a_1,a_2,\ldots,a_r;q)_k}{(b_1,b_2,\ldots,b_s;q)_k}\Big\{(-1)^kq^{\binm{k}{2}}\Big\}^{s-r}z^k.
 \enm
 When $b_1=q$, the bilateral basic hypergeometric
series becomes the unilateral basic hypergeometric series
 \bnm\quad
{_{r}\phi_{s-1}}\ffnk{cccccc}{q,z}{a_1,&a_2,&\ldots,&a_r}{&b_2,&\ldots,&b_s}
  =\sum_{k=0}^{\infty}\frac{(a_1,a_2,\ldots,a_r;q)_k}{(q,b_2,\ldots,b_s;q)_k}
\Big\{(-1)^kq^{\binm{k}{2}}\Big\}^{s-r}z^k.
 \enm
 Then Ramanujan's $_1\psi_1$ summation formula (cf. \citu{gasper}{Appendix II. 29}) and Bailey's
$_6\psi_6$ summation formula (cf. \citu{gasper}{Appendix II. 33})
can be stated as
 \bmn\label{ramanujan}
{_1\psi_1}\ffnk{cccccc}{q,z}{a}{b}=
\frac{(q,b/a,az,q/az;q)_{\infty}}{(b,q/a,z,b/az;q)_{\infty}},
 \emn
where $|b/a|<|z|<1$,
  \bmn\label{bailey}
  &&\xqdn{_6\psi_6}\ffnk{cccccccccc}{q,\frac{a^2q}{bcde}}
 {qa^{\frac{1}{2}},-qa^{\frac{1}{2}},b,c,d,e}
 {a^{\frac{1}{2}},-a^{\frac{1}{2}},aq/b,aq/c,aq/d,aq/e}
 \nnm\\&&\xqdn\:\,=\:
\frac{(q,aq,q/a,aq/bc,aq/bd,aq/be,aq/cd,aq/ce,aq/de;q)_{\infty}}{(q/b,q/c,q/d,q/e,aq/b,aq/c,aq/d,aq/e,a^2q/bcde;q)_{\infty}},
 \emn
provided $|a^2q/bcde|<1$.

\hspace{0.3cm} Via the analytic continuation method, Ismail
\cito{ismail} gave a simple proof of \eqref{ramanujan}. Askey and
Ismail \cito{askey} confirmed \eqref{bailey} in the same way. More
results from this method can be seen in Zhu \cito{zhu}.

\hspace{0.3cm} Inspired by the work just mentioned, we shall
establish the following three transformation formulas involving
bilateral basic hypergeometric series through the analytic
continuation method.

\begin{thm}\label{thm-aa}
Let $a,b,c,x$ be complex numbers such that $\max\{|aq/bcx|,
|aqx/bc|\}<1$. Then
  \bnm
  &&\xqdn{_2\psi_2}\ffnk{ccccccc}
 {q,\frac{aqx}{bc}}{b,c}{aq/b,aq/c}
  \\[1mm]
 &&\xqdn\:=\frac{(q/a,aq/bc,q/bx,q/cx,qx,aqx^2;q)_{\infty}}{(q/b,q/c,aq/bcx,q/ax,aqx,qx^2;q)_{\infty}}
  \\[1mm]
 &&\xqdn\:\times\: {_8\psi_8}\ffnk{cccccccc}
 {q,\frac{aqx}{bc}}{q(ax)^{\frac{1}{2}},-q(ax)^{\frac{1}{2}},(aq)^{\frac{1}{2}},-(aq)^{\frac{1}{2}},a^{\frac{1}{2}},-a^{\frac{1}{2}},bx,cx}
 {(ax)^{\frac{1}{2}},-(ax)^{\frac{1}{2}},x(aq)^{\frac{1}{2}},-x(aq)^{\frac{1}{2}},xqa^{\frac{1}{2}},-xqa^{\frac{1}{2}},aq/b,aq/c}.
 \enm
\end{thm}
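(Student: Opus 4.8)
The plan is to prove the identity by the analytic continuation method, treating $c$ as the free complex variable while $a$, $b$, $x$ are held fixed and generic; by the manifest symmetry of both sides under $b\leftrightarrow c$ this costs nothing. Writing $u=1/c$, I would regard each side as a function of $u$ and show that the two sides agree on the sequence $u=q^m$ $(m=0,1,2,\dots)$, which accumulates at the interior point $u=0$. The identity theorem for analytic functions then forces equality throughout the domain $\max\{|aq/bcx|,|aqx/bc|\}<1$, and the full range of $x$ in the stated annulus is recovered at the end.

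First I would establish analyticity near $u=0$. For the left-hand ${}_2\psi_2$, whose general term is $\frac{(b,c;q)_k}{(aq/b,aq/c;q)_k}(aqx/bc)^k$, the ratio of consecutive terms tends to $aqx/bc$ as $k\to+\infty$ and the reversed ratio is governed by $a^2q^2/(bc)^2$, so the two hypotheses $|aqx/bc|<1$ and $|aq/bcx|<1$ control exactly the $k\to+\infty$ and $k\to-\infty$ tails respectively; both quantities tend to $0$ as $c\to\infty$, so a Weierstrass majorant gives locally uniform convergence and hence analyticity of the left side in $u$ near $0$. On the right, the infinite-product prefactor is analytic in $u$ near $0$, and the very-well-poised ${}_8\psi_8$ --- which has center $ax$, free parameters $(aq)^{1/2},-(aq)^{1/2},a^{1/2},-a^{1/2},bx,cx$, and again argument $aqx/bc$ --- converges locally uniformly in the same neighbourhood. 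Thus both sides are analytic functions of $u$ on a disc about $u=0$.

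Next I would verify the identity at $c=q^{-m}$. There the factor $(c;q)_k=(q^{-m};q)_k$ annihilates every term with $k>m$, so the left-hand ${}_2\psi_2$ truncates from above; reindexing by $k\mapsto m-n$ turns it into a unilateral basic hypergeometric series in $n$. I would then show that this series equals the specialized prefactor times the specialized ${}_8\psi_8$, and I expect this reduced identity to follow from Bailey's ${}_6\psi_6$ summation \eqref{bailey}, supplemented if necessary by a very-well-poised ${}_6\phi_5$ or $q$-Pfaff--Saalschutz evaluation to resum the truncated series --- precisely the tools highlighted in the introduction.

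The main obstacle is exactly this base case. The specialization $c=q^{-m}$ truncates only the left side (the numerator entry $cx=q^{-m}x$ does not reduce the ${}_8\psi_8$), so one must match a unilateral series against the product times a genuinely bilateral very-well-poised ${}_8\psi_8$, carefully tracking the square-root parameters $\pm(aq)^{1/2},\pm a^{1/2}$ and the prefactor so that the whole expression collapses to an instance of \eqref{bailey}. Once this base identity is secured for every $m$, the accumulation of $\{q^m\}$ at the interior point $u=0$, together with the analyticity established above, completes the argument by analytic continuation, and the remaining parameter ranges follow by the $b\leftrightarrow c$ symmetry and continuation.
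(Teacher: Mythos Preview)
Your overall scheme---fix $a,b,x$, regard both sides as analytic functions of $c$ (or of $u=1/c$), check them on an accumulating sequence, and invoke the identity theorem---is exactly the paper's strategy. The analyticity discussion is fine. The problem is your choice of special points.

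You propose $c=q^{-m}$, which zeros the numerator entry $(c;q)_k$ on the left for $k>m$ and hence truncates only the ${}_2\psi_2$. On the right, however, the ${}_8\psi_8$ sees $c$ only through the numerator entry $cx=q^{-m}x$ and the denominator entry $aq/c=aq^{1+m}$, neither of which forces any truncation; the series stays genuinely bilateral with all its free parameters present. Your ``base case'' therefore asks you to match a one-sided series against a full very-well-poised ${}_8\psi_8$, and the tools you name cannot do this: Bailey's ${}_6\psi_6$ is a \emph{summation} with two fewer free parameters than remain here, and neither Rogers' ${}_6\phi_5$ nor $q$-Saalsch\"utz turns a bilateral ${}_8\psi_8$ into a closed form. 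You correctly flag this as ``the main obstacle,'' but you do not resolve it; as written, the base case is simply unproved.

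The fix is to specialize the \emph{denominator} parameter that the two series share. Take $aq/c=q^{1+m}$, i.e.\ $c=aq^{-m}$. Since $aq/c$ sits in the bottom row of \emph{both} the ${}_2\psi_2$ and the ${}_8\psi_8$, this choice annihilates the terms with index $<-m$ on each side simultaneously. Shifting $k\mapsto k+m$ (Lemma~\ref{lemma-add}) converts both bilateral series into unilateral ones, and the identity one must check collapses to the known transformation between a ${}_2\phi_1$ and a very-well-poised ${}_8\phi_7$ (Gasper--Rahman, Equation~(3.4.7)) after undoing the substitutions $a\to aq^{-2m}$, $b\to bq^{-m}$, $x\to bxq^{-m}$. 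The base case is then a citation rather than a new lemma, and the analytic-continuation step goes through exactly as you outlined, with the accumulation point now at $aq/c\to 0$.
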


\begin{thm}\label{thm-bb}
Let $a,b,c,x$ be complex numbers such that $\max\{|aq/bcx|,
|ax/bcq|\}<1$. Then
  \bnm
  &&\xqdn{_4\psi_4}\ffnk{ccccccc}
 {q,\frac{ax}{bcq}}{qa^{\frac{1}{2}},-qa^{\frac{1}{2}},b,c}{a^{\frac{1}{2}},-a^{\frac{1}{2}},aq/b,aq/c}
  \\[1mm]
 &&\xqdn\:=\frac{(q/a,aq/bc,q/bx,q/cx,x/q,ax^2;q)_{\infty}}{(q/b,q/c,aq/bcx,q/ax,aqx,x^2/q;q)_{\infty}}
  \\[1mm]
 &&\xqdn\:\times\: {_8\psi_8}\ffnk{cccccccc}
 {q,\frac{ax}{bcq}}{q(ax)^{\frac{1}{2}},-q(ax)^{\frac{1}{2}},(aq)^{\frac{1}{2}},-(aq)^{\frac{1}{2}},qa^{\frac{1}{2}},-qa^{\frac{1}{2}},bx,cx}
 {(ax)^{\frac{1}{2}},-(ax)^{\frac{1}{2}},x(aq)^{\frac{1}{2}},-x(aq)^{\frac{1}{2}},xa^{\frac{1}{2}},-xa^{\frac{1}{2}},aq/b,aq/c}.
 \enm
\end{thm}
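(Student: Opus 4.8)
The plan is to establish Theorem~\ref{thm-bb} by the analytic continuation method used by Ismail~\cito{ismail} and Askey--Ismail~\cito{askey}, with $b$ serving as the continuation variable while $a$, $c$, $x$ are held fixed and generic. Passing to the local coordinate $1/b$, one observes that as $|b|\to\infty$ both $|aq/bcx|$ and $|ax/bcq|$ tend to $0$, so the two bilateral series converge uniformly on compact subsets of a neighbourhood of $1/b=0$; the explicit prefactor is holomorphic there too, since each of its factors either is free of $b$ or is a convergent $q$-product in the variable $1/b$. Hence both members of the identity are holomorphic functions of $1/b$ near $0$, and by the identity theorem it suffices to verify the identity on a sequence of values of $b$ accumulating at $b=\infty$, and then to propagate the equality, by analytic continuation across the connected region $|b|>\max\{|aq/cx|,|ax/cq|\}$, to the full range $\max\{|aq/bcx|,|ax/bcq|\}<1$ claimed in the statement.

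The natural test values are $b=aq^{1-m}$, equivalently $aq/b=q^{m}$, for $m=1,2,3,\dots$, which accumulate at $1/b=0$. The decisive structural feature is that $aq/b$ is a lower parameter of \emph{both} the left-hand ${}_4\psi_4$ and the right-hand ${}_8\psi_8$; when $aq/b=q^{m}$ the factor $1/(q^{m};q)_k$ annihilates every term with $k\le-m$, so each bilateral sum truncates from below. Re-indexing by $k\mapsto k+1-m$ then collapses the ${}_4\psi_4$ to an ordinary ${}_4\phi_3$ and the ${}_8\psi_8$ to a very-well-poised ${}_8\phi_7$, each multiplied by an explicit ratio of $q$-shifted factorials produced by the shift: the former lower entry $q^{m}$ becomes the structural base $q$, and every other parameter is displaced by $q^{1-m}$.

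The core of the argument is the verification of the resulting unilateral identity. For this I would feed the very-well-poised ${}_8\phi_7$ into Watson's transformation, which rewrites it as an explicit product of $q$-shifted factorials times a balanced ${}_4\phi_3$; the expectation is that this balanced ${}_4\phi_3$ coincides, term by term, with the ${}_4\phi_3$ coming from the left-hand side, so that the special-value identity reduces to an equality of two ratios of infinite $q$-products, checked by routine cancellation. Where the parameter pattern permits, the ${}_8\phi_7$ instead degenerates to a summable ${}_6\phi_5$, or one may match directly against Bailey's ${}_6\psi_6$ evaluation~\eqref{bailey}; either route furnishes the needed closed form. Once equality holds for every $m\ge1$, holomorphy in $1/b$ at the origin forces it for all admissible $b$, and the continuation described above yields Theorem~\ref{thm-bb} in full.

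The step I expect to be the main obstacle is precisely this unilateral verification. One must track with care the prefactors generated by the reflection $k\mapsto k+1-m$, and confirm that the ${}_4\phi_3$ supplied by Watson's transformation matches the series extracted from the ${}_4\psi_4$ rather than merely agreeing in the aggregate; in particular the non-terminating form of Watson's transformation carries a second ${}_4\phi_3$, and one must show that the chosen product factors make that contribution cancel or reassemble correctly. A subsidiary but genuine difficulty is the analytic bookkeeping: confirming that both sides really are holomorphic in $1/b$ at $0$, so that the accumulation of the points $1/b=q^{m-1}/a$ is legitimate, and that the convergence region is connected, so that equality near $b=\infty$ indeed spreads to the whole region asserted in Theorem~\ref{thm-bb}.
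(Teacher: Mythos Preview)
Your overall strategy coincides with the paper's: specialize one lower parameter (you pick $aq/b$, the paper picks $aq/c$; by the $b\leftrightarrow c$ symmetry these are equivalent) to a positive power of $q$, truncate both bilateral series from below, shift to obtain a unilateral identity, and then invoke the identity theorem in the local variable near $0$ to propagate to the full region. Your remarks on holomorphy, accumulation of the test points, and connectedness of the convergence region are all correct.

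The gap is in the unilateral verification. After truncation and the shift $k\mapsto k+1-m$, the left-hand side becomes a ${}_4\phi_3$ whose argument is the specialized value of $ax/bcq$, which is \emph{not} $q$; in particular this ${}_4\phi_3$ is not balanced. Watson's transformation, in either its terminating or non-terminating form, sends a very-well-poised ${}_8\phi_7$ to one or two \emph{balanced} ${}_4\phi_3$'s with argument $q$, so the series it produces cannot coincide term by term with the one you have on the left, and there is no mechanism by which the second term of the non-terminating Watson would reassemble into a non-balanced series. Your fallback suggestions (a ${}_6\phi_5$ summation or Bailey's ${}_6\psi_6$) would require further degeneration that the generic parameters $a,c,x$ do not force. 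The identity actually needed at the special values is exactly Gasper--Rahman equation~(3.4.8), a single-term transformation of the shape ${}_4\phi_3=\text{product}\times{}_8\phi_7$ with a \emph{free} argument; this is a quadratic-type identity, not a corollary of Watson. The paper's proof simply quotes (3.4.8), performs the substitution $a\to aq^{-2m}$, $b\to bq^{-m}$, $x\to bxq^{-m}$, recognises the result (via Lemma~\ref{lemma-add}) as Theorem~\ref{thm-bb} specialized at $aq/c=q^{1+m}$, and then runs the analytic continuation exactly as you outline.
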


\begin{thm}\label{thm-cc}
Let $a,b,c,d,e,f,g$ be complex numbers with $\max\{
|a^3q^2/bcdefg|,|aq/fg|\}<1$ and  ${\mu}=bcde/aq$. Then
  \bnm
  &&\xqdn{_8\psi_8}\ffnk{ccccccc}
 {q,\frac{a^3q^2}{bcdefg}}{qa^{\frac{1}{2}},-qa^{\frac{1}{2}},b,c,d,e,f,g}{a^{\frac{1}{2}},-a^{\frac{1}{2}},aq/b,aq/c,aq/d,aq/e,aq/f,aq/g}
  \\[1mm]
 &&\xqdn\:=\frac{(aq,q/a,aq/cd,aq/ce,aq/de,aq/fg,b/a,{\mu}q/c,{\mu}q/d,{\mu}q/e,aq/{\mu}f,aq/{\mu}g;q)_{\infty}}
 {(q/f,q/g,aq/c,aq/d,aq/e,bc/a,bd/a,be/a,b/{\mu},{\mu}q,q/{\mu},a^2q/{\mu}fg;q)_{\infty}}
  \\[1mm]
 &&\xqdn\:\times\: {_8\psi_8}\ffnk{cccccccc}
 {q,\frac{aq}{fg}}{q\mu^{\frac{1}{2}},-q\mu^{\frac{1}{2}},b,c,d,e,\mu f/a,{\mu}g/a}
 {\mu^{\frac{1}{2}},-\mu^{\frac{1}{2}},\mu q/b,{\mu}q/c,{\mu}q/d,{\mu}q/e,aq/f,aq/g}
\\[1mm]
  &&\xqdn\:+\:\frac{(q,aq,q/a,c,d,e,bq/c,bq/d,bq/e,bq/f,bq/g;q)_{\infty}}
  {(q/f,q/g,aq/b,aq/c,aq/d,aq/e,aq/f,aq/g,bc/a,bd/a,be/a;q)_{\infty}}
  \enm
  \bnm
  &&\xqdn\:\times\:\frac{(aq/bf,aq/bg,bcde/a^2q,a^2q^2/bcde;q)_{\infty}}{(q/b,b^2q/a,cde/aq,aq^2/cde;q)_{\infty}}\\[1mm]
 &&\xqdn\:\times\:{_8\phi_7}\ffnk{ccccccc}
 {q,\frac{a^3q^2}{bcdefg}}{b^2/a,qba^{-\frac{1}{2}},-qba^{-\frac{1}{2}},bc/a,bd/a,be/a,bf/a,bg/a}
  {ba^{-\frac{1}{2}},-ba^{-\frac{1}{2}},bq/c,bq/d,bq/e,bq/f,bq/g}.
 \enm
\end{thm}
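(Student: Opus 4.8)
The plan is to follow the analytic continuation method that Ismail \cito{ismail} used for \eqref{ramanujan} and that Askey and Ismail \cito{askey} used for \eqref{bailey}. Fix $a,b,c,d,e,f$ and regard the two sides of Theorem~\ref{thm-cc} as functions of the single variable $g$, say $F(g)$ (the left member) and $G(g)$ (the two-term right member). Because $\mu=bcde/aq$ is independent of $f$ and $g$, all three series keep their very-well-poised shape as $g$ varies, and the hypothesis $\max\{|a^3q^2/bcdefg|,|aq/fg|\}<1$ holds for all sufficiently large $|g|$. First I would check, by uniform convergence on compact subsets (Weierstrass $M$-test), that $F$ and $G$ extend holomorphically in the variable $w=1/g$ to a neighborhood of $w=0$: as $w\to0$ the arguments $a^3q^2/bcdefg$ and $aq/fg$ tend to $0$, every infinite product in the prefactors tends to a finite limit, and the surviving factors keep the general terms summable, so $w=0$ is a regular point for both members.

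Next I would produce a sequence of values of $g$ accumulating at $w=0$ at which the identity reduces to a classically known one. The natural choice is $g=aq^{-n}$ for $n=0,1,2,\dots$, so that $w=q^n/a\to0$. At such a value the lower parameter $aq/g=q^{\,n+1}$ forces the negative-index terms of both bilateral ${}_8\psi_8$ series to vanish for $k\le -n-1$; after shifting the summation index, each bilateral series collapses to a one-sided very-well-poised series. Theorem~\ref{thm-cc} then becomes a transformation purely among unilateral series, which I would verify by the standard machinery: Watson's ${}_8\phi_7\!\to{}_4\phi_3$ transformation together with the terminating very-well-poised ${}_8W_7$ (Jackson) summation, or equivalently a suitable specialization of Bailey's ${}_{10}\phi_9$ transformation and its associated three-term relation. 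The finitely many negative-index terms that survive the truncation are exactly what assemble into the residual ${}_8\phi_7$ on the right, while the bulk of the sum reproduces the ${}_8\psi_8$ of base $\mu$.

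Having established $F(aq^{-n})=G(aq^{-n})$ for every $n\ge0$, I would invoke the identity theorem: the points $w=q^n/a$ lie in the disk of analyticity and accumulate at its interior point $w=0$, so $F\equiv G$ throughout, which is the assertion of Theorem~\ref{thm-cc}. The restriction that arose from singling out $g$ as the continuation variable is then removed by the symmetry of the statement in $f,g$ and by continuing afterwards in the remaining parameters.

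The main obstacle is the base case, not the continuation. Two points need real care. First, one must track all the infinite-product prefactors through the index shift $k\mapsto k-n$ and confirm that, in the limit, they reproduce the two different prefactors multiplying the ${}_8\psi_8$ and the ${}_8\phi_7$ on the right. Second, and more essentially, one must account for the two-term structure: the bilateral-to-unilateral reduction does not yield a single summable series but a main very-well-poised piece plus a correction coming from the surviving negative-index terms, and matching this split to the correct classical transformation, so that the two prefactors and the two series emerge exactly as stated, is the delicate computation. Once that reduced identity is pinned down, the analyticity and accumulation arguments are routine.
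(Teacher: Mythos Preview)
Your analytic-continuation skeleton matches the paper's: fix all parameters but $g$, verify the identity at $g=aq^{-m}$ (equivalently $aq/g=q^{m+1}$), where both bilateral ${}_8\psi_8$ series collapse via the index shift of Lemma~\ref{lemma-add} to unilateral ${}_8\phi_7$ series, and then apply Lemma~\ref{lemma}.

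The gap is the base case, and your diagnosis of it is off in two ways. First, at $g=aq^{-m}$ none of the three resulting series terminates (no upper parameter becomes $q^{-N}$), so Watson's ${}_8\phi_7\to{}_4\phi_3$ transformation, Jackson's summation, and Bailey's ${}_{10}\phi_9$ transformation---all of which require a terminating parameter---are not the right tools. The reduced identity is a genuine \emph{nonterminating} three-term relation among very-well-poised ${}_8\phi_7$'s; the paper isolates it as Lemma~\ref{lemm-b} and proves it by combining the two-term transformation \eqref{two-term} with the nonterminating three-term relation \citu{gasper}{Appendix III.37}. Second, your heuristic that ``the finitely many negative-index terms that survive the truncation are exactly what assemble into the residual ${}_8\phi_7$ on the right'' misreads the mechanism: after the index shift, the left-hand ${}_8\psi_8$ becomes a \emph{single} ${}_8\phi_7$, the first right-hand ${}_8\psi_8$ becomes another single ${}_8\phi_7$, and the third ${}_8\phi_7$ was unilateral from the start. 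The two-term structure on the right is intrinsic to the unilateral three-term relation of Lemma~\ref{lemm-b}, not a by-product of splitting off a negative tail from the left side.
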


\hspace{0.3cm} Fixing $x=1/c$ in Theorem \ref{thm-aa}, we obtain the
following result.

\begin{corl}\label{corl-aa}
Let $a,b,c$ be complex numbers such that $\max\{|aq/b|,
|aq/bc^2|\}<1$. Then
  \bnm
  &&\xqdn{_2\psi_2}\ffnk{ccccccc}
 {q,\frac{aq}{bc^2}}{b,c}{aq/b,aq/c}
  \\[1mm]
 &&\xqdn\:=\frac{(q,q/a,aq/bc,aq/c^2,cq/b;q)_{\infty}}{(q/b,q/c^2,aq/b,aq/c,cq/a;q)_{\infty}}
  \\[1mm]
 &&\xqdn\:\times\: {_8\phi_7}\ffnk{cccccccc}
 {q,\frac{aq}{bc^2}}{c/a,q(c/a)^{\frac{1}{2}},-q(c/a)^{\frac{1}{2}},c(q/a)^{\frac{1}{2}},-c(q/a)^{\frac{1}{2}},c/a^{\frac{1}{2}},-c/a^{\frac{1}{2}},b/a}
 {(c/a)^{\frac{1}{2}},-(c/a)^{\frac{1}{2}},(q/a)^{\frac{1}{2}},-(q/a)^{\frac{1}{2}},q/a^{\frac{1}{2}},-q/a^{\frac{1}{2}},cq/b}.
 \enm
\end{corl}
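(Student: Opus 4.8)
The plan is to obtain Corollary~\ref{corl-aa} as the direct specialization $x=1/c$ of Theorem~\ref{thm-aa}, so no new tools are required beyond a careful collapse of the bilateral series on the right. First I would check admissibility of the substitution: at $x=1/c$ one has $aq/bcx=aq/b$ and $aqx/bc=aq/bc^2$, so the convergence hypothesis $\max\{|aq/bcx|,|aqx/bc|\}<1$ of Theorem~\ref{thm-aa} becomes exactly the hypothesis $\max\{|aq/b|,|aq/bc^2|\}<1$ stated in the corollary. The left-hand ${}_2\psi_2$ then has argument $aqx/bc=aq/bc^2$ and is otherwise unchanged, matching the corollary's left-hand side verbatim.

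Next I would simplify the infinite-product prefactor. Substituting $x=1/c$ sends the numerator factors to $q/bx=cq/b$, $q/cx=q$, $qx=q/c$, $aqx^2=aq/c^2$ and the denominator factors to $aq/bcx=aq/b$, $q/ax=cq/a$, $aqx=aq/c$, $qx^2=q/c^2$. The factor $(q/c;q)_\infty$ appears once upstairs (from $qx$) and once downstairs, so it cancels, and what survives is precisely $\frac{(q,q/a,aq/bc,aq/c^2,cq/b;q)_\infty}{(q/b,q/c^2,aq/b,aq/c,cq/a;q)_\infty}$, the prefactor of the corollary. I would note that this cancellation is clean: no factor degenerates to a zero or a pole at $x=1/c$.

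The substantive step is the reduction of the bilateral ${}_8\psi_8$ to the unilateral ${}_8\phi_7$. At $x=1/c$ the top parameter $cx$ equals $1$, so $(cx;q)_k=(1;q)_k$ vanishes for every $k\ge1$ and the bilateral sum collapses to its terms with $k\le0$. Reindexing $k\mapsto-k$ and applying the inversion $(\alpha;q)_{-k}=\frac{(-1)^k q^{\binom{k+1}{2}}}{\alpha^k(q/\alpha;q)_k}$ to each of the eight numerator and eight denominator factors, the eight sign-and-power factors $(-1)^k q^{\binom{k+1}{2}}$ from the top cancel those from the bottom (equal counts, since $s=r=8$). This turns each lower parameter $\beta$ into a new top parameter $q/\beta$ and each upper parameter $\alpha$ into a new bottom parameter $q/\alpha$; the choice $\alpha=cx=1$ yields the factor $(q;q)_k$ that renders the series unilateral. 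A direct check shows the new top row is $c/a,\,q(c/a)^{1/2},\,-q(c/a)^{1/2},\,c(q/a)^{1/2},\,-c(q/a)^{1/2},\,c/a^{1/2},\,-c/a^{1/2},\,b/a$ and the new bottom row is $(c/a)^{1/2},\,-(c/a)^{1/2},\,(q/a)^{1/2},\,-(q/a)^{1/2},\,q/a^{1/2},\,-q/a^{1/2},\,cq/b$, matching the ${}_8\phi_7$ of the corollary.

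Finally I would pin down the argument. The reflection replaces $z^k$ by $z^{-k}$ and leaves the geometric factor $\big(\prod_i\beta_i\big/\prod_j\alpha_j\big)^k$; computing the two products gives $\prod_j\alpha_j=-a^3bcq^3x^3$ and $\prod_i\beta_i=-a^5q^5x^5/bc$, so $\prod_i\beta_i\big/\prod_j\alpha_j=(aqx/bc)^2=z^2$. Combined with $z^{-k}$ this yields argument $z=aqx/bc=aq/bc^2$, as claimed. The main obstacle is the bookkeeping in this last step: one must track the eight inversions, the cancellation of the sign-and-power factors, and the two parameter products at once, and verify that they assemble into exactly the stated ${}_8\phi_7$ with argument $aq/bc^2$; everything else is routine specialization.
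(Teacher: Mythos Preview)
Your proposal is correct and takes exactly the same route as the paper: the corollary is obtained from Theorem~\ref{thm-aa} by the single substitution $x=1/c$. The paper states this in one line without working through the collapse of the ${}_8\psi_8$ to a ${}_8\phi_7$; you have simply filled in the bookkeeping (the cancellation of $(q/c;q)_\infty$ in the prefactor, the vanishing of $(cx;q)_k=(1;q)_k$ for $k\ge1$, the $k\mapsto-k$ inversion, and the parameter-product computation giving back the argument $aq/bc^2$), all of which checks out.
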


\hspace{0.3cm} Setting $x=1/c$ in Theorem \ref{thm-bb}, we get the
following conclusion.

\begin{corl}\label{corl-bb}
Let $a,b,c$ be complex numbers such that $\max\{|aq/b|,
|a/bc^2q|\}<1$. Then
  \bnm
  &&\xqdn{_4\psi_4}\ffnk{ccccccc}
 {q,\frac{a}{bc^2q}}{qa^{\frac{1}{2}},-qa^{\frac{1}{2}},b,c}{a^{\frac{1}{2}},-a^{\frac{1}{2}},aq/b,aq/c}
  \\[1mm]
 &&\xqdn\:=\frac{(q,q/a,1/cq,aq/bc,a/c^2,cq/b;q)_{\infty}}{(q/b,q/c,1/c^2q,aq/b,aq/c,cq/a;q)_{\infty}}
  \\[1mm]
 &&\xqdn\:\times\: {_8\phi_7}\ffnk{cccccccc}
 {q,\frac{a}{bc^2q}}{c/a,q(c/a)^{\frac{1}{2}},-q(c/a)^{\frac{1}{2}},c(q/a)^{\frac{1}{2}},-c(q/a)^{\frac{1}{2}},cq/a^{\frac{1}{2}},-cq/a^{\frac{1}{2}},b/a}
 {(c/a)^{\frac{1}{2}},-(c/a)^{\frac{1}{2}},(q/a)^{\frac{1}{2}},-(q/a)^{\frac{1}{2}},1/a^{\frac{1}{2}},-1/a^{\frac{1}{2}},cq/b}.
 \enm
\end{corl}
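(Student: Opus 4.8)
The plan is to derive Corollary~\ref{corl-bb} as the special case $x=1/c$ of Theorem~\ref{thm-bb}; the essential phenomenon is that this choice turns one numerator parameter of the right-hand ${}_8\psi_8$ into $1$, collapsing that bilateral series into a unilateral ${}_8\phi_7$.

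First I would substitute $x=1/c$ throughout Theorem~\ref{thm-bb} and dispose of the routine ingredients. On the left the argument $ax/bcq$ becomes $a/bc^2q$, so the ${}_4\psi_4$ reproduces that of the corollary verbatim, while the hypothesis $\max\{|aq/bcx|,|ax/bcq|\}<1$ becomes $\max\{|aq/b|,|a/bc^2q|\}<1$. The product prefactor is a direct simplification: the numerator factors $(q/a,aq/bc,q/bx,q/cx,x/q,ax^2;q)_\infty$ become $(q/a,aq/bc,cq/b,q,1/cq,a/c^2;q)_\infty$ and the denominator factors become $(q/b,q/c,aq/b,cq/a,aq/c,1/c^2q;q)_\infty$, which is exactly the prefactor of the corollary after reordering. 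Here nothing degenerates---in particular $q/cx=q$ contributes the harmless $(q;q)_\infty$---so the prefactor stays finite and nonzero.

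The heart of the matter is the right-hand ${}_8\psi_8$. With $x=1/c$ the numerator parameter $cx$ equals $1$, and since $(1;q)_k=0$ for every $k\ge1$, all terms of positive index vanish and the bilateral sum collapses to $\sum_{k\le0}$. I would then set $k=-j$ and invert each factorial by the reflection rule
\[
(\alpha;q)_{-j}=\frac{(-q/\alpha)^{j}\,q^{\binom{j}{2}}}{(q/\alpha;q)_{j}}.
\]
Under this substitution each of the eight upper parameters $\alpha$ becomes a lower parameter $q/\alpha$ of the new series, and each of the eight lower parameters $\beta$ becomes an upper parameter $q/\beta$; the distinguished parameter $cx=1$ supplies precisely the factor $(q;q)_j$ that renders the reduced series a genuine ${}_8\phi_7$. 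Since the numbers of upper and lower parameters coincide, the $q^{\binom{j}{2}}$ powers and the signs cancel in pairs, consistent with $s-r=0$ for both series.

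Carrying out this swap---the only real bookkeeping obstacle---I expect to recover for the ${}_8\phi_7$ the upper parameters $c/a,\,q(c/a)^{1/2},\,-q(c/a)^{1/2},\,c(q/a)^{1/2},\,-c(q/a)^{1/2},\,cq/a^{1/2},\,-cq/a^{1/2},\,b/a$ and the lower parameters $(c/a)^{1/2},\,-(c/a)^{1/2},\,(q/a)^{1/2},\,-(q/a)^{1/2},\,1/a^{1/2},\,-1/a^{1/2},\,cq/b$, as in the statement. The final check is the argument: under $k\mapsto-j$ it is replaced by $z^{-1}\prod\beta_i/\prod\alpha_i$, and evaluating the two parameter products gives $\prod\alpha_i=-q^5a^3b/c^2$ and $\prod\beta_i=-a^5q^3/(bc^6)$, whence $z^{-1}\prod\beta_i/\prod\alpha_i=a/bc^2q$, i.e.\ the argument is unchanged. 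Thus the reduced series is exactly the ${}_8\phi_7$ of the corollary, and the result follows. The only genuine difficulty is tracking the sixteen parameters correctly through the reflection; everything else is already licensed by Theorem~\ref{thm-bb}.
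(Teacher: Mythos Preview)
Your proposal is correct and follows exactly the route the paper takes: the paper derives the corollary simply by ``Setting $x=1/c$ in Theorem~\ref{thm-bb}'' without further comment, and your write-up supplies the details of why the ${}_8\psi_8$ collapses to a ${}_8\phi_7$ (the parameter $cx=1$ kills positive-index terms, and after the reflection $k\mapsto -j$ the parameter $q/(cx)=q$ appears downstairs). Your parameter bookkeeping and the argument check are accurate.
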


\hspace{0.3cm} When $aq=de$, Theorem \ref{thm-cc} becomes
  \bnm
  &&\xqdn{_6\psi_6}\ffnk{cccccccccc}{q,\frac{a^2q}{bcfg}}
 {qa^{\frac{1}{2}},-qa^{\frac{1}{2}},b,c,f,g}
 {a^{\frac{1}{2}},-a^{\frac{1}{2}},aq/b,aq/c,aq/f,aq/g}
 \\[1mm]&&\xqdn\:\,=\:
\frac{(q,aq,q/a,aq/bc,aq/bf,aq/bg,bq/c,bq/f,bq/g;q)_{\infty}}{(q/b,q/c,q/f,q/g,aq/b,aq/c,aq/f,aq/g,b^2q/a;q)_{\infty}}
 \\[1mm]&&\xqdn\:\,\times\:\,
 {_6\phi_5}\ffnk{cccccccccc}{q,\frac{a^2q}{bcfg}}
 {b^2/a,qba^{-\frac{1}{2}},-qba^{-\frac{1}{2}},bc/a,bf/a,bg/a}
 {ba^{-\frac{1}{2}},-ba^{-\frac{1}{2}},bq/c,bq/f,bq/g}.
 \enm
Calculating the series on the right-hand side by Rogers' $_6\phi_5$
summation formula (cf. \citu{gasper}{Appendix II. 21}):
 \bnm
 { _6\phi_5}\ffnk{cccccccccc}{q,\frac{aq}{bcd}}
 {a,qa^{\frac{1}{2}},-qa^{\frac{1}{2}},b,c,d}
 {a^{\frac{1}{2}},-a^{\frac{1}{2}},aq/b,aq/c,aq/d}
 =
\frac{(aq,aq/bc,aq/bd,aq/cd;q)_{\infty}}{(aq/b,aq/c,aq/d,aq/bcd;q)_{\infty}},
 \enm
we arrive at Bailey's $_6\psi_6$ summation formula \eqref{bailey}.

\hspace{0.3cm} By means of Cauchy's method, Jouhet \cito{jouhet}
found the identity involving two $_8\psi_8$ series and two
$_8\phi_7$ series:
 \bmn\label{jouhet}
  &&\xqdn{_8\psi_8}\ffnk{ccccccc}
 {q,c}{qa^{\frac{1}{2}},-qa^{\frac{1}{2}},b,d,e,f,g,h}{a^{\frac{1}{2}},-a^{\frac{1}{2}},aq/b,aq/d,aq/e,aq/f,aq/g,aq/h}
  \nnm\\[1mm]
 &&\xqdn\:=\frac{(aq,q/a,\lambda c/a,aq/\lambda d,aq/\lambda e,b/a,bf/\lambda,bg/\lambda,bh/\lambda,\lambda q/f,\lambda q/g,\lambda q/h;q)_{\infty}}
 {(\lambda q,q/\lambda, c,q/d,q/e,b/\lambda,bf/a,bg/a,bh/a,aq/f,aq/g,aq/h;q)_{\infty}}
 \nnm\\[1mm]
  &&\xqdn\:\times\: {_8\psi_8}\ffnk{cccccccc}
 {q,\frac{\lambda c}{a}}{q\lambda^{\frac{1}{2}},-q\lambda^{\frac{1}{2}},b,\lambda d/a,\lambda e/a,f,g,h}
 {\lambda^{\frac{1}{2}},-\lambda^{\frac{1}{2}},\lambda q/b,aq/d,aq/e,\lambda q/f,\lambda q/g,\lambda
 q/h}
 \nnm\\[1mm]
  &&\xqdn\:+\:\frac{b}{a}\frac{(q, q/a, c/b, aq, bq/c, bq/d, bq/e, bq/f, bq/g, bq/h, d, e;q)_{\infty}}
  {(q/b, c/a, b^2q/a, aq/b, aq/c, bd/a, be/a, bf/a, bg/a, bh/a, aq/d, aq/e;q)_{\infty}}
  \nnm\\[1mm]
  &&\xqdn\:\times\:\frac{(f,g,h;q)_{\infty}}{(aq/f, aq/g, aq/h;q)_{\infty}}
 \nnm\\[1mm]
 &&\nnm\xqdn\:\times\:{_8\phi_7}\ffnk{ccccccc}
 {q,c}{b^2/a,qba^{-\frac{1}{2}},-qba^{-\frac{1}{2}},bd/a,be/a,bf/a,bg/a,bh/a}
  {ba^{-\frac{1}{2}},-ba^{-\frac{1}{2}},bq/d,bq/e,bq/f,bq/g,bq/h}\\[1mm]
&&\nnm\xqdn\:+\:
 \frac{(q, q/a, b/a, aq/\lambda d, aq/\lambda e, \lambda c/ab, aq, f, g, h,\lambda c/a, \lambda d/a, \lambda e/a;q)_{\infty}}
  {(c, c/a, q/b, q/d, q/e, bq/\lambda, b^2q/\lambda, aq/f, aq/g, aq/h, bd/a, be/a, bf/a;q)_{\infty}}
  \nnm\\[1mm]
  &&\xqdn\:\times\:\frac{(bq/f, bq/g, bq/h, abq/\lambda c, abq/\lambda d, abq/\lambda e;q)_{\infty}}
  {( bg/a, bh/a, \lambda/b, aq/c, aq/d, aq/e;q)_{\infty}}
 \nnm\\[1mm]
 &&\xqdn\:\times\:{_8\phi_7}\ffnk{ccccccc}
 {q,\frac{\lambda c}{a}}{b^2/\lambda,qb\lambda^{-\frac{1}{2}},-qb\lambda^{-\frac{1}{2}},bd/a,be/a,bf/\lambda,bg/\lambda,bh/\lambda}
  {b\lambda^{-\frac{1}{2}},-b\lambda^{-\frac{1}{2}},abq/\lambda d,abq/\lambda e,bq/f,bq/g,bq/h},
 \emn
where $c=a^3q^2/bdefgh$, $\lambda=a^2q/cde$ and $\max\{|c|, |\lambda
c/a|\}<1$.

\hspace{0.3cm} Recall the transformation formula (cf.
\citu{gasper}{Appendix III. 23}):
 \bmn\label{two-term}
  &&\xxqdn\xxqdn{_8\phi_7}\ffnk{ccccccc}
 {q,\frac{a^2q^2}{bcdef}}{a,qa^{\frac{1}{2}},-qa^{\frac{1}{2}},b,c,d,e,f}{a^{\frac{1}{2}},-a^{\frac{1}{2}},aq/b,aq/c,aq/d,aq/e,aq/f}
 \nnm\\[1mm]\nnm
 &&\xxqdn\xxqdn\:=
\frac{(aq,aq/ef,\lam q/e,\lam q/f;q)_{\infty}}
 {(aq/e,aq/f,\lam q,\lam q/ef;q)_{\infty}}
  \\[1mm]
 &&\xxqdn\xxqdn\:\times\:{_8\phi_7}\ffnk{ccccccc}
 {q,\frac{aq}{ef}}{\lam,q\lam^{\frac{1}{2}},-q\lam^{\frac{1}{2}},\lam b/a,\lam c/a,\lam d/a,e,f}
  {\lam^{\frac{1}{2}},-\lam^{\frac{1}{2}},aq/b,aq/c,aq/d,\lam q/e,\lam q/f}
 \emn
with $\lam=a^2q/bcd$ and the three-term relation (cf.
\citu{gasper}{Exercise 2.16}):
 \bnm
S(x\lambda,x/\lambda,\mu\nu,\mu/v)-S(x\nu,x/\nu,\lambda\mu,\mu/\lambda)=\frac{\mu}{\lambda}S(x\mu,x/\mu,\lambda\nu,\lambda/v),
 \enm
where
 \bnm
S(\lambda,\mu,\nu,\rho)=(\lambda,q/\lambda,\mu,q/\mu,\nu,q/\nu,\rho,q/\rho;q)_{\infty}.
\enm
 On the basis of the last two equations, it is not
 difficult to understand the equivalence of Theorem \ref{thm-cc} and
 Fouhet's result \eqref{jouhet}.

\hspace{0.3cm} The rest of the paper is arranged as follows. The
proofs of Theorems \ref{thm-aa} and \ref{thm-bb}  will respectively
be displayed in Sections 2 and 3. In section 4, we shall give a new
proof of Theorem \ref{thm-cc}.

\section{Proof of Theorem \ref{thm-aa}}
\hspace{0.3cm}In order to prove Theorem \ref{thm-aa}, we need the
following two lemmas.

\begin{lemm}\label{lemma-add}  For all integers $s,t\in\mathbb{Z}$, it holds
\bnm
 \frac{(a;q)_{s+t}}{(b;q)_{s+t}}=\frac{(a;q)_{s}}{(b;q)_{s}}\frac{(aq^s;q)_{t}}{(bq^s;q)_{t}}.
 \enm
\end{lemm}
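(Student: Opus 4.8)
The plan is to reduce the claim to a manipulation of the infinite product $(x;q)_{\infty}=\prod_{i=0}^{\infty}(1-xq^{i})$ by means of the defining relation $(x;q)_n=(x;q)_{\infty}/(xq^n;q)_{\infty}$ recorded at the start of Section~1. The key observation is that this relation is meaningful for \emph{every} integer $n$, not only for $n\ge 0$: since $|q|<1$ the product $(x;q)_{\infty}$ converges absolutely, so that $(xq^n;q)_{\infty}=\prod_{i=0}^{\infty}(1-xq^{n+i})$ is a well-defined nonzero quantity for negative $n$ as well (generically in $a,b$). Hence both sides of the asserted identity can be written purely in terms of such infinite products, and the statement will follow by cancellation.

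First I would expand the left-hand side as
\[
\frac{(a;q)_{s+t}}{(b;q)_{s+t}}
=\frac{(a;q)_{\infty}\,(bq^{s+t};q)_{\infty}}{(b;q)_{\infty}\,(aq^{s+t};q)_{\infty}}.
\]
Then I would expand each of the two factors on the right-hand side in the same way,
\[
\frac{(a;q)_{s}}{(b;q)_{s}}
=\frac{(a;q)_{\infty}\,(bq^{s};q)_{\infty}}{(b;q)_{\infty}\,(aq^{s};q)_{\infty}},
\qquad
\frac{(aq^{s};q)_{t}}{(bq^{s};q)_{t}}
=\frac{(aq^{s};q)_{\infty}\,(bq^{s+t};q)_{\infty}}{(bq^{s};q)_{\infty}\,(aq^{s+t};q)_{\infty}}.
\]
Multiplying the latter two and cancelling the common factors $(aq^{s};q)_{\infty}$ and $(bq^{s};q)_{\infty}$ produces exactly the right-hand side of the displayed expansion of $(a;q)_{s+t}/(b;q)_{s+t}$, which establishes the lemma.

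The only point requiring care --- and the nearest thing to an obstacle --- is precisely the treatment of the index over all of $\mathbb{Z}$. One must read $(xq^n;q)_{\infty}$ for $n<0$ as the convergent product $\prod_{i\ge 0}(1-xq^{n+i})$ and verify that the factors occurring in the denominators do not vanish, which holds under the implicit genericity of the parameters. Granting this, no splitting into cases according to the signs of $s$, $t$, or $s+t$ is needed: the identity collapses to the formal cancellation above and is valid uniformly for all $s,t\in\mathbb{Z}$. This uniformity is in fact the reason the lemma is stated for arbitrary integers, since in the analytic continuation argument the summation index will range over $\mathbb{Z}$ and the splitting $k=s+t$ must be legitimate for every sign combination.
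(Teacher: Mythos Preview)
Your argument is correct. The paper actually states this lemma without proof, treating it as an immediate consequence of the definition $(x;q)_n=(x;q)_{\infty}/(xq^{n};q)_{\infty}$ given in the introduction; your expansion via infinite products is precisely the intended verification, and in fact the same cancellation already yields the stronger identity $(a;q)_{s+t}=(a;q)_{s}(aq^{s};q)_{t}$ for each parameter separately, from which the ratio form follows at once.
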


\begin{lemm}[cf. \citu{lang}{p. 90}; see also \citu{zhu}{p.
741}] \label{lemma} Let $U$ be a connected open set and $f$, $g$ be
analytic on $U$. If $f$ and $g$ agree infinitely often near an
interior point of $U$, then we have $f(z)=g(z)$ for all $z\in U$.
\end{lemm}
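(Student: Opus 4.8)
The plan is to reduce the statement to the classical identity principle for a single analytic function by setting $h = f - g$, which is analytic on $U$. The hypothesis that $f$ and $g$ agree infinitely often near an interior point $z_0$ should be read as saying that the zero set of $h$ contains a sequence of distinct points $z_n \to z_0$ with $z_0 \in U$; in other words, $z_0$ is a non-isolated zero of $h$. The task then becomes the assertion that an analytic function on a connected open set, whose zeros accumulate at an interior point, must vanish identically, and from $h \equiv 0$ we recover $f = g$ on all of $U$.

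First I would establish vanishing of $h$ on a whole neighborhood of $z_0$. Since $h$ is analytic at $z_0$, it has a convergent Taylor expansion $h(z) = \sum_{k \ge 0} c_k (z - z_0)^k$ on some disc $D \subset U$. Suppose $h \not\equiv 0$ on $D$ and let $m$ be the least index with $c_m \neq 0$; then $h(z) = (z - z_0)^m \phi(z)$ with $\phi$ analytic and $\phi(z_0) = c_m \neq 0$. By continuity $\phi$ is nonzero on a smaller disc, so $h$ would be nonzero at every point of that disc other than $z_0$, contradicting the existence of distinct zeros $z_n$ converging to $z_0$. Hence all $c_k$ vanish and $h \equiv 0$ on $D$.

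Next I would globalize via the connectedness of $U$. Let $A = \{\, w \in U : h^{(k)}(w) = 0 \text{ for all } k \ge 0 \,\}$. This set is closed in $U$ because each derivative $h^{(k)}$ is continuous, and it is open because membership of $w$ forces the Taylor series of $h$ at $w$ to be identically zero, so $h$ vanishes on an entire disc about $w$, which therefore lies in $A$. The local step gives $z_0 \in A$, so $A$ is nonempty; since $U$ is connected and $A$ is both open and closed, $A = U$, whence $h \equiv 0$ and $f = g$ throughout $U$.

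The step that does the real work, and the one I expect to be the main obstacle, is the propagation from the neighborhood of $z_0$ to all of $U$ through the clopen argument: connectedness is precisely what prevents the zero set from being confined to a single component. The local Taylor computation is routine, but it is essential to interpret the phrase \emph{agree infinitely often near an interior point} as accumulation at $z_0$ rather than as finitely scattered coincidences, since an interior accumulation point is exactly what activates the identity principle.
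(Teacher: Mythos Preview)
Your argument is correct and is exactly the standard proof of the identity theorem: form $h=f-g$, use the Taylor expansion at the accumulation point $z_0$ to show $h$ vanishes on a disc, and then run the open--closed argument on the set $A=\{w\in U:h^{(k)}(w)=0\text{ for all }k\ge0\}$ together with connectedness of $U$. Your reading of ``agree infinitely often near an interior point'' as accumulation at $z_0$ is the right one.

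As for comparison with the paper: there is nothing to compare. The paper does not supply a proof of this lemma at all; it simply quotes it as a known result with references to Lang's \emph{Complex Analysis} and to Zhu. The lemma functions in the paper purely as a black-box tool invoked in the analytic continuation arguments of Sections~2--4, so your write-up goes beyond what the authors provide.
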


\begin{proof}[Proof of Theorem \ref{thm-aa}]
A known transformation formula for unilateral basic hypergeometric
series (cf. \citu{gasper}{Equation (3.4.7)}) reads
 \bnm
  &&\xqdn{_2\phi_1}\ffnk{ccccccc}{q,\frac{qx}{b^2}}{a,b}{aq/b}
  =\frac{(xq/b,aqx^2/b^2;q)_{\infty}}{(aqx/b,qx^2/b^2;q)_{\infty}}
  \\[1mm]
 &&\xqdn\:\times\: {_8\phi_7}\ffnk{cccccccc}
 {q,\frac{qx}{b^2}}{ax/b,q(ax/b)^{\frac{1}{2}},-q(ax/b)^{\frac{1}{2}},(aq)^{\frac{1}{2}},-(aq)^{\frac{1}{2}},a^{\frac{1}{2}},-a^{\frac{1}{2}},x}
 {(ax/b)^{\frac{1}{2}},-(ax/b)^{\frac{1}{2}},x(aq)^{\frac{1}{2}}/b,-x(aq)^{\frac{1}{2}}/b,xqa^{\frac{1}{2}}/b,-xqa^{\frac{1}{2}}/b,aq/b}.
 \enm
Performing the replacements $a\to aq^{-2m}, b\to bq^{-m}, x \to
bxq^{-m}$ in this equation, we obtain
 \bmn\label{equation-a}
  &&\xxqdn{_2\phi_1}\ffnk{ccccccc}{q,\frac{q^{1+m}x}{b}}{q^{-2m}a,q^{-m}b}{q^{1-m}a/b}
  =\frac{(qx,q^{1-2m}ax^2;q)_{\infty}}{(qx^2,q^{1-2m}ax;q)_{\infty}}
  \nnm\\[1mm]
 &&\xxqdn\:\times\: {_8\phi_7}\ffnk{cccccccc}
 {q,\frac{q^{1+m}x}{b}}{q^{-2m}ax,q^{1-m}(ax)^{\frac{1}{2}},-q^{1-m}(ax)^{\frac{1}{2}},
 q^{-m}(aq)^{\frac{1}{2}},-q^{-m}(aq)^{\frac{1}{2}},\\[1mm]
 q^{-m}a^{\frac{1}{2}},-q^{-m}a^{\frac{1}{2}},q^{-m}bx}
 {q^{-m}(ax)^{\frac{1}{2}},-q^{-m}(ax)^{\frac{1}{2}},q^{-m}x(aq)^{\frac{1}{2}},-q^{-m}x(aq)^{\frac{1}{2}},\\[1mm]
 q^{1-m}xa^{\frac{1}{2}},-q^{1-m}xa^{\frac{1}{2}},q^{1-m}a/b}.
 \emn

\hspace{0.3cm}Let $m$ be a nonnegative integer throughout the paper.
On one hand, it is easy to deduce, from Lemma \ref{lemma-add}, the
following relation:
  \bmn\label{equation-b}
  &&\xxqdn{_2\phi_1}\ffnk{ccccccc}{q,\frac{q^{1+m}x}{b}}{q^{-2m}a,q^{-m}b}{q^{1-m}a/b}
  \nnm\\[1mm]
 &&\xxqdn\:=\sum_{k=-m}^{\infty}\frac{(q^{-2m}a,q^{-m}b;q)_{k+m}}{(q,q^{1-m}a/b;q)_{k+m}}\bigg(\frac{q^{1+m}x}{b}\bigg)^{k+m}
\nnm\\[1mm]
 &&\xxqdn\:=\frac{(q^{-2m}a,q^{-m}b;q)_{m}}{(q,q^{1-m}a/b;q)_{m}}\bigg(\frac{q^{1+m}x}{b}\bigg)^{m}
{_2\psi_2}\ffnk{ccccccc}{q,\frac{q^{1+m}x}{b}}{q^{-m}a,b}{q^{1+m},aq/b}.
 \emn
On the other hand, we derive, from Lemma \ref{lemma-add}, the
following relation:
 \bmn
 &&\xqdn{_8\phi_7}\ffnk{cccccccc}
 {q,\frac{q^{1+m}x}{b}}{q^{-2m}ax,q^{1-m}(ax)^{\frac{1}{2}},-q^{1-m}(ax)^{\frac{1}{2}},
 q^{-m}(aq)^{\frac{1}{2}},-q^{-m}(aq)^{\frac{1}{2}},\\[1mm]
 q^{-m}a^{\frac{1}{2}},-q^{-m}a^{\frac{1}{2}},q^{-m}bx}
 {q^{-m}(ax)^{\frac{1}{2}},-q^{-m}(ax)^{\frac{1}{2}},q^{-m}x(aq)^{\frac{1}{2}},-q^{-m}x(aq)^{\frac{1}{2}},\\[1mm]
 q^{1-m}xa^{\frac{1}{2}},-q^{1-m}xa^{\frac{1}{2}},q^{1-m}a/b}
 \nnm
 \emn
\bmn
 &&\xqdn\:
=\sum_{n=-m}^{\infty}\frac{(q^{-2m}ax,q^{1-m}(ax)^{\frac{1}{2}},-q^{1-m}(ax)^{\frac{1}{2}},
 q^{-m}(aq)^{\frac{1}{2}},-q^{-m}(aq)^{\frac{1}{2}};q)_{n+m}}
 {(q,q^{-m}(ax)^{\frac{1}{2}},-q^{-m}(ax)^{\frac{1}{2}},q^{-m}x(aq)^{\frac{1}{2}},-q^{-m}x(aq)^{\frac{1}{2}};q)_{n+m}}
 \nnm\\[1mm]\nnm
 &&\xqdn\qquad\times\:
 \frac{(q^{-m}a^{\frac{1}{2}},-q^{-m}a^{\frac{1}{2}},q^{-m}bx;q)_{n+m}}{(q^{1-m}xa^{\frac{1}{2}},-q^{1-m}xa^{\frac{1}{2}},q^{1-m}a/b;q)_{n+m}}
 \bigg(\frac{q^{1+m}x}{b}\bigg)^{n+m}
\\[1mm]
 &&\xqdn\:
=\frac{(q^{-2m}ax,q^{1-m}(ax)^{\frac{1}{2}},-q^{1-m}(ax)^{\frac{1}{2}},
 q^{-m}(aq)^{\frac{1}{2}},-q^{-m}(aq)^{\frac{1}{2}};q)_{m}}
 {(q,q^{-m}(ax)^{\frac{1}{2}},-q^{-m}(ax)^{\frac{1}{2}},q^{-m}x(aq)^{\frac{1}{2}},-q^{-m}x(aq)^{\frac{1}{2}};q)_{m}}
 \nnm\\[1mm]
 &&\xqdn\:\times\:
 \frac{(q^{-m}a^{\frac{1}{2}},-q^{-m}a^{\frac{1}{2}},q^{-m}bx;q)_{m}}{(q^{1-m}xa^{\frac{1}{2}},-q^{1-m}xa^{\frac{1}{2}},q^{1-m}a/b;q)_{m}}
 \bigg(\frac{q^{1+m}x}{b}\bigg)^{m}
  \nnm\\[1mm]
 &&\xqdn\:\times\: {_8\psi_8}\ffnk{cccccccc}
 {q,\frac{q^{1+m}x}{b}}{q^{-m}ax,q(ax)^{\frac{1}{2}},-q(ax)^{\frac{1}{2}},(aq)^{\frac{1}{2}},-(aq)^{\frac{1}{2}},a^{\frac{1}{2}},-a^{\frac{1}{2}},bx}
 {q^{1+m},(ax)^{\frac{1}{2}},-(ax)^{\frac{1}{2}},x(aq)^{\frac{1}{2}},-x(aq)^{\frac{1}{2}},xqa^{\frac{1}{2}},-xqa^{\frac{1}{2}},aq/b}.
\label{equation-c}
 \emn
Substituting \eqref{equation-b} and \eqref{equation-c} into
\eqref{equation-a} leads us to
 \bmn\label{equation-d}
  &&\xxqdn{_2\psi_2}\ffnk{ccccccc}{q,\frac{q^{1+m}x}{b}}{q^{-m}a,b}{q^{1+m},aq/b}
  \nnm\\[1mm]
 &&\xxqdn\:=\frac{(q^{1+m}/b,q^{1+m}/ax,q/a,q/bx,qx,aqx^2,;q)_{\infty}}{(q^{1+m}/a,q^{1+m}/bx,q/b,q/ax,qx^2,aqx;q)_{\infty}}
  \nnm\\[1mm]
 &&\xxqdn\:\times\: {_8\psi_8}\ffnk{cccccccc}
 {q,\frac{q^{1+m}x}{b}}{q^{-m}ax,q(ax)^{\frac{1}{2}},-q(ax)^{\frac{1}{2}},(aq)^{\frac{1}{2}},-(aq)^{\frac{1}{2}},a^{\frac{1}{2}},-a^{\frac{1}{2}},bx}
 {q^{1+m},(ax)^{\frac{1}{2}},-(ax)^{\frac{1}{2}},x(aq)^{\frac{1}{2}},-x(aq)^{\frac{1}{2}},xqa^{\frac{1}{2}},-xqa^{\frac{1}{2}},aq/b}.
 \emn

\hspace{0.3cm}Define two functions $f(z)$ and $g(z)$ to be
 \bnm
&&\xqdn\xxqdn\qqdn f(z)={_2\psi_2}\ffnk{ccccccc}{q,\frac{xz}{b}}{aq/z,b}{z,aq/b}\\[1mm]
 &&\xxqdn=\sum_{k=0}^{\infty}\frac{(b;q)_{k}\prod_{i=1}^k(z-aq^i)}
 {(aq/b,z;q)_{k}}\bigg(\frac{x}{b}\bigg)^{k}
+\sum_{k=1}^{\infty}\frac{(b/a;q)_{k}\prod_{i=1}^k(z-q^i)}
 {(q/b,z/a;q)_{k}}\bigg(\frac{1}{bx}\bigg)^{k},
 \enm
\bnm
 &&\xqdn g(z)=\frac{(z/b,z/ax,q/a,q/bx,qx,aqx^2,;q)_{\infty}}{(z/a,z/bx,q/b,q/ax,qx^2,aqx;q)_{\infty}}
  \nnm\\[1mm]
 &&\:\:\:\times\: {_8\psi_8}\ffnk{cccccccc}
 {q,\frac{xz}{b}}{aqx/z,q(ax)^{\frac{1}{2}},-q(ax)^{\frac{1}{2}},(aq)^{\frac{1}{2}},-(aq)^{\frac{1}{2}},a^{\frac{1}{2}},-a^{\frac{1}{2}},bx}
 {z,(ax)^{\frac{1}{2}},-(ax)^{\frac{1}{2}},x(aq)^{\frac{1}{2}},-x(aq)^{\frac{1}{2}},xqa^{\frac{1}{2}},-xqa^{\frac{1}{2}},aq/b}\\[1mm]
 &&\:\:\,=\frac{(z/b,z/ax,q/a,q/bx,qx,aqx^2,;q)_{\infty}}{(z/a,z/bx,q/b,q/ax,qx^2,aqx;q)_{\infty}}\\[1mm]
  &&\:\:\:\times\:\sum_{n=0}^{\infty}\frac{(q(ax)^{\frac{1}{2}},-q(ax)^{\frac{1}{2}},(aq)^{\frac{1}{2}},-(aq)^{\frac{1}{2}},a^{\frac{1}{2}},-a^{\frac{1}{2}},bx;q)_{n}\prod_{i=1}^n(z-q^iax)}
 {((ax)^{\frac{1}{2}},-(ax)^{\frac{1}{2}},x(aq)^{\frac{1}{2}},-x(aq)^{\frac{1}{2}},xqa^{\frac{1}{2}},-xqa^{\frac{1}{2}},aq/b,z;q)_{n}}\bigg(\frac{x}{b}\bigg)^{n}\\[1mm]
&&\:\:\:+\frac{(z/b,z/ax,q/a,q/bx,qx,aqx^2,;q)_{\infty}}{(z/a,z/bx,q/b,q/ax,qx^2,aqx;q)_{\infty}}\\[1mm]
 &&\:\:\:\times\:\sum_{n=1}^{\infty}\frac{(q/(ax)^{\frac{1}{2}},-q/(ax)^{\frac{1}{2}},(q/a)^{\frac{1}{2}}/x,-(q/a)^{\frac{1}{2}}/x,1/xa^{\frac{1}{2}},-1/xa^{\frac{1}{2}},b/a;q)_{n}}
 {(1/(ax)^{\frac{1}{2}},-1/(ax)^{\frac{1}{2}},(q/a)^{\frac{1}{2}},-(q/a)^{\frac{1}{2}},q/a^{\frac{1}{2}},-q/a^{\frac{1}{2}},q/bx;q)_{n}}\\[1mm]
&&\:\:\:\times\:\frac{\prod_{i=1}^n(z-q^i)}
 {(z/ax;q)_{n}}\bigg(\frac{x}{b}\bigg)^{n}.
\enm
  Then \eqref{equation-d} shows that
 \bmn \label{equation-e}
 f(z)=g(z)
 \emn
for $z=q^{1+m}$. Based on Lemma \ref{lemma}, \eqref{equation-e}
holds for all $|z|<\min\{1,|a|,|ax|,|bx|\}$. By the analytic
continuation method, the restriction on $z$ can by relaxed. Thus we
get
  \bnm
&&{_2\psi_2}\ffnk{ccccccc}{q,\frac{xz}{b}}{aq/z,b}{z,aq/b}\\[1mm]
 &&\:=\frac{(z/b,z/ax,q/a,q/bx,qx,aqx^2,;q)_{\infty}}{(z/a,z/bx,q/b,q/ax,qx^2,aqx;q)_{\infty}}
  \nnm\\[1mm]
 &&\:\times\: {_8\psi_8}\ffnk{cccccccc}
 {q,\frac{xz}{b}}{aqx/z,q(ax)^{\frac{1}{2}},-q(ax)^{\frac{1}{2}},(aq)^{\frac{1}{2}},-(aq)^{\frac{1}{2}},a^{\frac{1}{2}},-a^{\frac{1}{2}},bx}
 {z,(ax)^{\frac{1}{2}},-(ax)^{\frac{1}{2}},x(aq)^{\frac{1}{2}},-x(aq)^{\frac{1}{2}},xqa^{\frac{1}{2}},-xqa^{\frac{1}{2}},aq/b}.
\enm Replying $z$ by $aq/c$ in it, we arrive at Theorem
\ref{thm-aa}.
\end{proof}

\section{Proof of Theorem \ref{thm-bb}}
\hspace{0.3cm} Now we begin to prove Theorem \ref{thm-bb} with the
help of Lemma \ref{lemma} and the following transformation formula
for unilateral basic hypergeometric series (cf.
\citu{gasper}{Equation (3.4.8)}):
 \bnm
  &&\xqdn{_4\phi_3}\ffnk{ccccccc}
 {q,\frac{x}{b^2q}}{a,qa^{\frac{1}{2}},-qa^{\frac{1}{2}},b}{a^{\frac{1}{2}},-a^{\frac{1}{2}},aq/b}
  =\frac{(ax^2/b^2,x/bq;q)_{\infty}}{(aqx/b,x^2/b^2q;q)_{\infty}}
  \\[1mm]
 &&\xqdn\:\times\: {_8\phi_7}\ffnk{cccccccc}
 {q,\frac{x}{b^2q}}{ax/b,q(ax/b)^{\frac{1}{2}},-q(ax/b)^{\frac{1}{2}},(aq)^{\frac{1}{2}},-(aq)^{\frac{1}{2}},qa^{\frac{1}{2}},-qa^{\frac{1}{2}},x}
 {(ax/b)^{\frac{1}{2}},-(ax/b)^{\frac{1}{2}},x(aq)^{\frac{1}{2}}/b,-x(aq)^{\frac{1}{2}}/b,xa^{\frac{1}{2}}/b,-xa^{\frac{1}{2}}/b,aq/b}.
 \enm

\begin{proof}[Proof of Theorem \ref{thm-bb}]
Employing the replacements $a\to aq^{-2m}, b\to bq^{-m}, x \to
bxq^{-m}$ in the last equation, we obtain
 \bnm
  &&\xxqdn{_4\phi_3}\ffnk{ccccccc}{q,\frac{q^{m-1}x}{b}}{q^{-2m}a,q^{1-m}a^{\frac{1}{2}},-q^{1-m}a^{\frac{1}{2}},q^{-m}b}
  {q^{-m}a^{\frac{1}{2}},-q^{-m}a^{\frac{1}{2}},q^{1-m}a/b}
  =\frac{(x/q,q^{-2m}ax^2;q)_{\infty}}{(x^2/q,q^{1-2m}ax;q)_{\infty}}
  \nnm\\[1mm]
 &&\xxqdn\:\times\: {_8\phi_7}\ffnk{cccccccc}
 {q,\frac{q^{m-1}x}{b}}{q^{-2m}ax,q^{1-m}(ax)^{\frac{1}{2}},-q^{1-m}(ax)^{\frac{1}{2}},
 q^{-m}(aq)^{\frac{1}{2}},-q^{-m}(aq)^{\frac{1}{2}},\\[1mm]
 q^{1-m}a^{\frac{1}{2}},-q^{1-m}a^{\frac{1}{2}},q^{-m}bx}
 {q^{-m}(ax)^{\frac{1}{2}},-q^{-m}(ax)^{\frac{1}{2}},q^{-m}x(aq)^{\frac{1}{2}},-q^{-m}x(aq)^{\frac{1}{2}},\\[1mm]
 q^{-m}xa^{\frac{1}{2}},-q^{-m}xa^{\frac{1}{2}},q^{1-m}a/b}.
 \enm
 According to Lemma \ref{lemma-add}, we can reformulate it
 as
 \bnm
  &&\xqdn{_4\psi_4}\ffnk{ccccccc}{q,\frac{q^{m-1}x}{b}}{q^{-m}a,qa^{\frac{1}{2}},-qa^{\frac{1}{2}},b}{q^{1+m},a^{\frac{1}{2}},-a^{\frac{1}{2}},aq/b}
  \nnm\\[1mm]
 &&\xqdn\:=\frac{(q^{1+m}/b,q^{1+m}/ax,q/a,q/bx,x/q,ax^2,;q)_{\infty}}{(q^{1+m}/a,q^{1+m}/bx,q/b,q/ax,x^2/q,aqx;q)_{\infty}}
  \nnm\\[1mm]
 &&\xqdn\:\times\:  {_8\psi_8}\ffnk{cccccccc}
 {q,\frac{q^{m-1}x}{b}}{q^{-m}ax,q(ax)^{\frac{1}{2}},-q(ax)^{\frac{1}{2}},(aq)^{\frac{1}{2}},-(aq)^{\frac{1}{2}},qa^{\frac{1}{2}},-qa^{\frac{1}{2}},bx}
 {q^{1+m},(ax)^{\frac{1}{2}},-(ax)^{\frac{1}{2}},x(aq)^{\frac{1}{2}},-x(aq)^{\frac{1}{2}},xa^{\frac{1}{2}},-xa^{\frac{1}{2}},aq/b}.
 \enm
This transformation tells us that Theorem \ref{thm-bb} is true for
$aq/c=q^{1+m}$. Therefore, we can prove Theorem \ref{thm-bb} via
Lemma \ref{lemma} and the analytic continuation argument.
\end{proof}

\section{A new Proof of Theorem \ref{thm-cc}}
\hspace{0.3cm} For the sake of proving Theorem \ref{thm-cc}, we
require the following lemma.

\begin{lemm}\label{lemm-b}
Let $a,b,c,d,e,f$ be complex numbers. Then
 \bnm
  &&\qqdn\xxqdn\xqdn{_8\phi_7}\ffnk{ccccccc}
 {q,\frac{a^2q^2}{bcdef}}{a,qa^{\frac{1}{2}},-qa^{\frac{1}{2}},b,c,d,e,f}{a^{\frac{1}{2}},-a^{\frac{1}{2}},aq/b,aq/c,aq/d,aq/e,aq/f}
  \\[1mm]
 &&\qqdn\xxqdn\xqdn\:=\frac{(aq,aq/cd,aq/ce,aq/de,b/a,{\mu}q/c,{\mu}q/d,{\mu}q/e;q)_{\infty}}{(aq/c,aq/d,aq/e,bc/a,bd/a,be/a,b/{\mu},{\mu}q;q)_{\infty}}\\
 &&\qqdn\xxqdn\xqdn\:\times\:{_8\phi_7}\ffnk{ccccccc}
 {q,\frac{q}{f}}{{\mu},q{\mu}^{\frac{1}{2}},-q{\mu}^{\frac{1}{2}},{\mu}f/a,b,c,d,e}
  {{\mu}^{\frac{1}{2}},-{\mu}^{\frac{1}{2}},aq/f,{\mu}q/b,{\mu}q/c,{\mu}q/d,{\mu}q/e}\\[1mm]
  &&\qqdn\xxqdn\xqdn\:+\:\frac{(aq,bq/a,bq/c,bq/d,bq/e,bq/f,c,d,e;q)_{\infty}}
  {(aq/b,aq/c,aq/d,aq/e,aq/f,bc/a,bd/a,be/a,cde/aq;q)_{\infty}}\\[1mm]
  &&\qqdn\xxqdn\xqdn\:\times\:\frac{(aq/bf,bcde/a^2q,a^2q^2/bcde;q)_{\infty}}{(aq^2/cde,q/f,b^2q/a;q)_{\infty}}\\[1mm]
 &&\qqdn\xxqdn\xqdn\:\times\:{_8\phi_7}\ffnk{ccccccc}
 {q,\frac{a^2q^2}{bcdef}}{b^2/a,qba^{-\frac{1}{2}},-qba^{-\frac{1}{2}},b,bc/a,bd/a,be/a,bf/a}
  {ba^{-\frac{1}{2}},-ba^{-\frac{1}{2}},bq/a,bq/c,bq/d,bq/e,bq/f},
 \enm
where ${\mu}=bcde/aq$ and $\max\{|q/f|, |a^2q^2/bcdef|\}<1$.
\end{lemm}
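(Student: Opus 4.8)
The plan is to recognise the asserted identity as a three-term transformation formula among very-well-poised balanced ${}_8\phi_7$ series. First I would dispose of the bookkeeping: writing ${}_8W_7(a;b,c,d,e,f)$ for the series on the left-hand side, I would check that the first series on the right is ${}_8W_7(\mu;\mu f/a,b,c,d,e)$ and the second is ${}_8W_7(b^2/a;b,bc/a,bd/a,be/a,bf/a)$, and that in each case the displayed argument is exactly the balanced one. For instance, once $\mu=bcde/aq$ is used one gets $\mu^2q^2/[(\mu f/a)\,bcde]=q/f$, and $(b^2/a)^2q^2/[\,b\,(bc/a)(bd/a)(be/a)(bf/a)]=a^2q^2/bcdef$. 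Thus all three series are genuine VWP-balanced ${}_8W_7$'s, with $b$ and $f$ playing the role of the two distinguished parameters, while $c,d,e$ enter symmetrically (as is visible from the factors $aq/cd,aq/ce,aq/de$ and $\mu q/c,\mu q/d,\mu q/e$ in the first coefficient).

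The main idea is that a nonterminating VWP-balanced ${}_8W_7$ can be written as a linear combination, with products of $q$-shifted factorials as coefficients, of two balanced ${}_4\phi_3$ series (the nonterminating extension of Watson's transformation, cf. \citu{gasper}{Section 2.10}). I would apply this expansion to each of the three ${}_8W_7$ series above. The parameter changes $a\mapsto\mu$ and $a\mapsto b^2/a$ that produce the two right-hand series are precisely the substitutions that carry the two underlying balanced ${}_4\phi_3$ series into the same pair, up to $q$-factorial prefactors; consequently all three ${}_8W_7$'s are expressed through a common pair of ${}_4\phi_3$ functions. Since three quantities are thereby written in terms of two linearly independent ones, they must satisfy a linear relation whose coefficients are the $2\times2$ minors of the resulting $3\times2$ coefficient matrix.

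It then remains to evaluate those minors and match them with the $q$-shifted-factorial coefficients in the statement. This is where the three-term theta relation recorded in the excerpt enters: after clearing common factors, each minor is a difference of two products $S(\cdot,\cdot,\cdot,\cdot)$ of theta functions, and the required simplification is exactly one instance of $S(x\lambda,x/\lambda,\mu\nu,\mu/\nu)-S(x\nu,x/\nu,\lambda\mu,\mu/\lambda)=\tfrac{\mu}{\lambda}\,S(x\mu,x/\mu,\lambda\nu,\lambda/\nu)$ for a suitable choice of $x,\lambda,\mu,\nu$. Equivalently, one may replace the Watson step by the two-term transformation \eqref{two-term}: applying it to the left-hand ${}_8W_7$, and to the second series on the right, brings everything to a common base, after which the same theta relation reconciles the two sets of coefficients.

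I expect the genuine obstacle to be computational rather than conceptual: matching the long products of $(\cdot\,;q)_{\infty}$ factors on both sides, and in particular pinning down which specialisation of $(x,\lambda,\mu,\nu)$ turns the theta three-term relation into the precise coefficient identity, will demand careful and lengthy manipulation. Convergence is not a real difficulty, since both ${}_8\phi_7$ series on the right converge under the stated hypothesis $\max\{|q/f|,|a^2q^2/bcdef|\}<1$, so the assertion is an equality of convergent series; if one prefers, the identity can first be proved for parameter values making the second series terminate, where Watson's transformation and a terminating summation apply, and then extended to general parameters by analyticity through Lemma \ref{lemma}.
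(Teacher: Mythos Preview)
Your plan is sound in principle and would succeed, but it takes a considerably longer road than the paper. The paper does not build the three-term relation from scratch via the nonterminating Watson expansion and a $3\times2$ minor computation; instead it quotes an existing three-term identity, namely \citu{gasper}{Appendix III.37}, which already expresses ${}_8W_7(a;b,c,d,e,f)$ as a combination of ${}_8W_7(b^2/a;b,bc/a,bd/a,be/a,bf/a)$ and an auxiliary series ${}_8W_7(ef/c;aq/bc,aq/cd,ef/a,e,f)$. The only remaining work is to apply the two-term transformation \eqref{two-term} to this auxiliary series, converting it into the desired ${}_8W_7(\mu;\ldots)$ with $\mu=bcde/aq$, and then to swap $c\leftrightarrow f$. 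No theta three-term relation is invoked at all: the coefficient matching is immediate because \eqref{two-term} already supplies its own prefactor, and the prefactors of III.37 carry over unchanged.

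What your approach buys is a self-contained explanation of \emph{why} such a relation must exist (three objects in a two-dimensional solution space are linearly dependent), whereas the paper's argument is a two-step reduction to tabulated identities that is much shorter but requires knowing which entries of \cito{gasper} to combine. Your remark about the theta relation $S(x\lambda,x/\lambda,\mu\nu,\mu/\nu)-\cdots$ is relevant elsewhere in the paper (in the discussion of the equivalence with Jouhet's formula \eqref{jouhet}), but for this particular lemma it is unnecessary.
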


\begin{proof}
It is clear from \eqref{two-term} that
 \bnm
  &&\xqdn{_8\phi_7}\ffnk{ccccccc}
 {q,\frac{bd}{a}}{ef/c,q(ef/c)^{\frac{1}{2}},-q(ef/c)^{\frac{1}{2}},aq/bc,aq/cd,ef/a,e,f}
  {(ef/c)^{\frac{1}{2}},-(ef/c)^{\frac{1}{2}},bef/a,def/a,aq/c,fq/c,eq/c}
 \\[1mm]&&\xqdn\:=
\frac{(q/c,efq/c,bde/a,bdf/a;q)_{\infty}}{(eq/c,fq/c,bd/a,bdef/a;q)_{\infty}}
  \\[1mm]
 &&\xqdn\:\times\:{_8\phi_7}\ffnk{ccccccc}
 {q,\frac{q}{c}}{bdef/aq,q(bdef/aq)^{\frac{1}{2}},-q(bdef/aq)^{\frac{1}{2}},bcdef/a^2q,b,d,e,f}
  {(bdef/aq)^{\frac{1}{2}},-(bdef/aq)^{\frac{1}{2}},aq/c,def/a,bef/a,bdf/a,bde/a}.
 \enm
Putting it into the transformation formula (cf.
\citu{gasper}{Appendix III. 37}):
 \bnm
  &&\xxqdn{_8\phi_7}\ffnk{ccccccc}
 {q,\frac{a^2q^2}{bcdef}}{a,qa^{\frac{1}{2}},-qa^{\frac{1}{2}},b,c,d,e,f}{a^{\frac{1}{2}},-a^{\frac{1}{2}},aq/b,aq/c,aq/d,aq/e,aq/f}\\[1mm]
 &&\xxqdn\:=\frac{(aq,aq/de,aq/df,aq/ef,eq/c,fq/c,b/a,bef/a;q)_{\infty}}{(aq/d,aq/e,aq/f,aq/def,q/c,efq/c,be/a,bf/a;q)_{\infty}}\\[1mm]
 &&\xxqdn\:\times\:{_8\phi_7}\ffnk{ccccccc}
 {q,\frac{bd}{a}}{ef/c,q(ef/c)^{\frac{1}{2}},-q(ef/c)^{\frac{1}{2}},aq/bc,aq/cd,ef/a,e,f}
  {(ef/c)^{\frac{1}{2}},-(ef/c)^{\frac{1}{2}},bef/a,def/a,aq/c,fq/c,eq/c} \\[1mm]
  &&\xxqdn\:+\:\frac{b}{a}\frac{(aq,bq/a,bq/c,bq/d,bq/e,bq/f,d,e,f;q)_{\infty}}
  {(aq/b,aq/c,aq/d,aq/e,aq/f,bd/a,be/a,bf/a,def/a;q)_{\infty}}\\[1mm]
  &&\xxqdn\:\times\:\frac{(aq/bc,bdef/a^2,a^2q/bdef;q)_{\infty}}{(aq/def,q/c,b^2q/a;q)_{\infty}}\\[1mm]
 &&\xxqdn\:\times\:{_8\phi_7}\ffnk{ccccccc}
 {q,\frac{a^2q^2}{bcdef}}{b^2/a,qba^{-\frac{1}{2}},-qba^{-\frac{1}{2}},b,bc/a,bd/a,be/a,bf/a}
  {ba^{-\frac{1}{2}},-ba^{-\frac{1}{2}},bq/a,bq/c,bq/d,bq/e,bq/f}
 \enm
 and then interchanging the parameters $c$ and $f$, we
decuce Lemma \ref{lemm-b}.
\end{proof}

\begin{proof}[Proof of Theorem \ref{thm-cc}]
Performing the replacements $a\to aq^{-2m},b\to bq^{-m}, c\to
cq^{-m}, d\to dq^{-m}, e\to eq^{-m}$, $f\to fq^{-m}$ in Lemma
\ref{lemm-b}, we derive
 \bnm
&&\xqdn{_8\phi_7}\!\ffnk{ccccccc}
 {q,\frac{q^{2+m}a^2}{bcdef}}{q^{-2m}a,q^{1-m}a^{\frac{1}{2}},-q^{1-m}a^{\frac{1}{2}},q^{-m}b,q^{-m}c,q^{-m}d,q^{-m}e,q^{-m}f}
 {q^{-m}\!a^{\frac{1}{2}},-q^{-m}\!a^{\frac{1}{2}},q^{1-m}a/b,q^{1-m}a/c,q^{1-m}a/d,q^{1-m}a/e,q^{1-m}a/f}
 \nnm\\[1mm]\nnm
 &&\xqdn\:=\:\frac{(q^{1-2m}a,aq/cd,aq/ce,aq/de,q^mb/a,q^{1-m}{\mu}/c,q^{1-m}{\mu}/d,q^{1-m}{\mu}/e;q)_{\infty}}
{(q^{1-m}a/c,q^{1-m}a/d,q^{1-m}a/e,bc/a,bd/a,be/a,q^{m}b/{\mu},q^{1-2m}{\mu};q)_{\infty}}\\[1mm]
&&\xqdn\:\times\:\:{_8\phi_7}\ffnk{ccccccc}
 {q,\frac{q^{1+m}}{f}}{q^{-2m}{\mu},q^{1-m}{\mu}^{\frac{1}{2}},-q^{1-m}{\mu}^{\frac{1}{2}},q^{-m}{\mu} f/a,q^{-m}b,q^{-m}c,q^{-m}d,q^{-m}e}
  {q^{-m}{\mu}^{\frac{1}{2}},-q^{-m}{\mu}^{\frac{1}{2}},q^{1-m}a/f,q^{1-m}{\mu}/b,q^{1-m}{\mu}/c,q^{1-m}{\mu}/d,q^{1-m}{\mu}/e}
  \nnm\\[1mm]\nnm
 &&\xqdn\:\times\:\:
\frac{(q^{1-2m}a,q^{1+m}b/a,bq/c,bq/d,bq/e,bq/f,q^{-m}c;q)_{\infty}}
  {(q^{1-m}a/b,q^{1-m}a/c,q^{1-m}a/d,q^{1-m}a/e,q^{1-m}a/f,bc/a,bd/a;q)_{\infty}}\\[1mm]
  &&\xqdn\:\times\:\:\frac{(q^{-m}d,q^{-m}e,aq/bf,bcde/a^2q,a^2q^2/bcde;q)_{\infty}}{(be/a,q^{-m-1}cde/a,q^{2+m}a/cde,q^{1+m}/f,b^2q/a;q)_{\infty}}
  \nnm\\[1mm]
 &&\xqdn\:\times\:\:{_8\phi_7}\ffnk{ccccccc}
 {q,\frac{q^{2+m}a^2}{bcdef}}{b^2/a,qba^{-\frac{1}{2}},-qba^{-\frac{1}{2}},q^{-m}b,bc/a,bd/a,be/a,bf/a}
  {ba^{-\frac{1}{2}},-ba^{-\frac{1}{2}},q^{1+m}b/a,bq/c,bq/d,bq/e,bq/f}.
 \enm
 By means of Lemma \ref{lemma-add}, the last transformation can be
 manipulated as
 \bnm
&&\qdn\xqdn{_8\psi_8}\ffnk{ccccccc}
 {q,\frac{q^{2+m}a^2}{bcdef}}{q^{-m}a,qa^{\frac{1}{2}},-qa^{\frac{1}{2}},b,c,d,e,f}{q^{1+m},a^{\frac{1}{2}},-a^{\frac{1}{2}},aq/b,aq/c,aq/d,aq/e,aq/f}
  \\ [1mm]
  &&\qdn\xqdn\:=\frac{(aq,q/a,aq/cd,aq/ce,aq/de,q^{1+m}/f,b/a,{\mu}q/c,{\mu}q/d,{\mu}q/e,aq/{\mu}f,q^{1+m}/{\mu};q)_{\infty}}
  {(q/f,q^{1+m}/a,aq/c,aq/d,aq/e,bc/a,bd/a,be/a,b/{\mu},{\mu}q,q/{\mu},q^{1+m}a/{\mu}f;q)_{\infty}}\\[1mm]
  &&\qdn\xqdn\:\times\: {_8\psi_8}\ffnk{cccccccc}
 {q;\frac{q^{1+m}}{f}}{q^{-m}{\mu},q{\mu}^{\frac{1}{2}},-q{\mu}^{\frac{1}{2}},b,c,d,e,{\mu}f/a}{q^{1+m},{\mu}^{\frac{1}{2}},-{\mu}^{\frac{1}{2}},{\mu}q/b,{\mu}q/c,{\mu}q/d,{\mu}q/e,aq/f}
 \\[1mm]
  &&\qdn\xqdn\:+\:\frac{(q,qa,q/a,c,d,e,bq/c,bq/d,bq/e,bq/f,q^{1+m}b/a;q)_{\infty}}
  {(q/f,q^{1+m}/a,aq/b,aq/c,aq/d,aq/e,aq/f,q^{1+m},bc/a,bd/a,be/a;q)_{\infty}}\\[1mm]
  &&\qdn\xqdn\:\times\:\frac{(aq/bf,q^{1+m}/b,bcde/a^2q,a^2q^2/bcde;q)_{\infty}}{(q/b,b^2q/a,cde/aq,aq^2/cde;q)_{\infty}}
  \\[1mm]
 &&\qdn\xqdn\:\times\:{_8\phi_7}\ffnk{ccccccc}
 {q,\frac{q^{2+m}a^2}{bcdef}}{b^2/a,qba^{-\frac{1}{2}},-qba^{-\frac{1}{2}},q^{-m}b,bc/a,bd/a,be/a,bf/a}
  {ba^{-\frac{1}{2}},-ba^{-\frac{1}{2}},q^{1+m}b/a,bq/c,bq/d,bq/e,bq/f}.
 \enm
This indicates that Theorem \ref{thm-cc} is true for $aq/g=q^{1+m}$.
Thus we have proved Theorem \ref{thm-cc} through Lemma \ref{lemma}
and the analytic continuation argument.
\end{proof}

 \textbf{Acknowledgments}

 The work is supported by the National Natural Science Foundations of China (Nos. 12071103 and 11661032).



\end{document}